\numberwithin{equation}{section}
\theoremstyle{plain}
\newtheorem{thm}{Тheorem}
\newtheorem{lemma}{Lemma}
\newtheorem{st}{Statement}
\theoremstyle{definition}
\theoremstyle{remark}
\newcommand{\R}{\mathbb{R}}
\newcommand{\B}{\mathbb{B}}
\newcommand{\dd}{\textrm{d}}
\newcommand{\PP}{\mathbb{P}}
\newcommand{\E}{\mathbb{E}}
\newcommand{\conv}{\textrm{conv}}
\newcommand{\Vol}{\textrm{Vol}}
\newcommand{\Pol}[3]{\mathcal{P}_{#1,#2}^{\widebar{#3}}}
\numberwithin{thm}{section}
\numberwithin{lemma}{section}
\numberwithin{st}{section}
\numberwithin{cor}{section}
\title{Mixed random beta-polytopes}
\keywords{Mixed random beta-polytopes, expected volume, beta distribution, convex hull, geometric probability, stochastic geometry, random polytopes, Wieacker's functional, Blaschke--Petkantschin formula, Kubota's formula.}
\subjclass[2020]{52A22, 60D05, 52B11.}
\author[T.~Moseeva]{Tatiana Moseeva}
\address{Tatiana Moseeva, Saint Petersburg University, 7/9 Universitetskaya nab., St. Petersburg, 199034 Russia
}
\email{polezina@yandex.ru}
\begin{document}

\thanks{This work was performed at the Saint Petersburg Leonhard Euler International Mathematical Institute and supported by the Ministry of Science and Higher Education of the Russian Federation (agreement no. 075–15–2022–287)
and  by the Foundation for the
Advancement of Theoretical Physics and Mathematics ``BASIS''.}

\begin{abstract}
In this paper, we generalize the result on the average volume of random polytopes with vertices following beta distributionsto the case of non-identically distributed vectors. Specifically,we consider the convex hull of independent random vectors in $\mathbb{R}^d$, where each vector follows a beta distribution with potentially different parameters. We derive an expression for the expected volume of these generalized beta--polytopes. Additionally, we compute the expected value of a functional introduced by Wieacker, which involves the distance of facets from the origin and their volumes.Our results  extend the findings of Kabluchko, Temesvari,and Th\"ale. Key techniques used in the proofs include the Blaschke--Petkantschin formula, Kubota's formula, and projections of beta distributed random vectors.
\end{abstract}

\maketitle
	
	\maketitle

\section{Introduction}

Let $X_1, \ldots, X_n \in \R^d$ be independent random vectors. Suppose that $2 \leqslant n \leqslant d+1$ and for some $k = 0, \ldots, n$ we have $X_1, \ldots, X_k$ uniformly distributed in the unit ball and $X_{k+1}, \ldots, X_n$ on the unit sphere. Then, their convex hull is almost surely an $(n-1)$-dimensional simplex. The classical result of Miles \cite{rM71} calculates the average volume of this simplex:
\begin{align*}
		    \E \textrm{Vol}_{n-1} (\textrm{conv}(X_1, \ldots, X_n)) &= \frac{1}{(n-1)!}\left(\frac{d}{d+1}\right)^k\frac{\Gamma\left(\frac{1}{2}n(d-1) + k + 1\right)}{\Gamma\left(\frac{1}{2}nd+k\right)} \\
            &\times \left(\frac{\Gamma(\frac{1}{2})d}{\Gamma(\frac{1}{2}(d+1))}\right)^{n-1}\prod_{j = 1}^{n-2}\frac{\Gamma\left(\frac{1}{2}(d-n+j+2)\right)}{\Gamma\left(\frac{1}{2}(d-n+j+1)\right)}.
\end{align*}

In fact, Miles calculated all positive integer moments of the volume;
see [\cite{rM71}, Theorem 2]. 

Later, Ruben and Miles \cite{RM80} generalized this result to the class of beta distributions in $\R^d$ with the probability densities defined as 
\[
	f_{d,\beta}(x) = c_{d, \beta}\cdot \left( 1 - |x|^2 \right)^{\beta}, \; |x| \leqslant 1,
\]
where $\beta > -1$ and normalizing constant $c_{d, \beta}$ is given by 
\[
	c_{d, \beta} = \frac{\Gamma(\frac{d}{2} + \beta + 1)}{\pi^{\frac d2}\Gamma(\beta + 1)}. 
\]

The uniform distribution in the unit ball corresponds to the case $\beta = 0$, while the uniform distribution on the unit sphere is obtained by taking a weak limit as $\beta$ approaches $-1$. Assuming that $X_i, i = 1, \ldots, n$, are  distributed with respect to $f_{d,\beta_i}$, they showed that for $k \in \mathbb{Z}_+, $

\begin{align}\label{miles}
    \E \left(\textrm{Vol}_{n-1}\left(\textrm{conv}(X_1, \ldots, X_n)\right)\right)^k &= ((n-1)!)^{-k}\frac{\Gamma\left(\frac{n(n-1+k)}{2} + \sum_{i = 1}^{n} \beta_i + 1\right)}{\Gamma\left(\frac{(n-1)(n+k)}{2} + \sum_{i = 1}^{n} \beta_i + 1\right)} \\ \notag
    &\times \prod_{j = 1}^{n-1} \frac{\Gamma\left(\frac{j+k}{2}\right)}{\Gamma\left(\frac{j}{2}\right)} \cdot \prod_{i = 1}^{n}\frac{\Gamma\left(\frac{n-1}{2} + \beta_i + 1\right)}{\Gamma\left(\frac{n - 1 + k}{2} + \beta_i + 1\right)}. 
\end{align}

Moreover, a similar result for the beta' polytopes has been obtained.

If we drop the condition $n \leqslant d+1,$ then the convex hull $\textrm{conv}(X_1, \ldots, X_n)$ is no longer a simplex, but a general convex polytope. In this case, Kabluchko et al. \cite{KTT19} obtained a formula for the average volume under the assumption that $\beta_1 = \ldots = \beta_n = \beta:$

\begin{align}\label{ktt}
    \E\mathrm{Vol}_d\left( \textrm{conv}(X_1, \ldots, X_n)\right) &= \frac{(d+1)\kappa_d}{\pi^{\frac{d+1}{2}}2^d} \cdot \binom{n}{d+1} \left(\frac{(d+1)}{2} +  \beta \right)\left(\frac{\Gamma(\frac{d+2}{2} + \beta)}{\Gamma(\frac{d+3}{2} + \beta)}\right)^{d+1} \\ \notag
    &\times \int_{-1}^1  (1-h^2)^{(d+1)\beta) + \frac{d^2 + 2d - 1}{2}}\cdot \left(F_{\beta + \frac{d-1}{2}}(h)\right)^{n - d - 1}  \dd h, 
\end{align}
where 
\[
	F_{\beta}(h) = c_{1, \beta}\int_{-1}^h (1 - x^2)^{\beta} \dd x, \;\; h \in [-1, 1] 
\]
is the CDF of the probability distribution associated with the density $f_{1, \beta}$.

They also considered the symmetrized random convex hulls and those
with the origin included, along with the case of the beta' random polytopes; see [\cite{KTT19}, Theorems 2.1, 2.2].
	
\section{Main results}
In this short note, we aim to generalize \eqref{ktt} to the case of non-identically distributed vectors.

Fix $\widebar{\beta} = (\beta_1, \ldots, \beta_n), \, \text{where} \, \beta_i > -1$. Let $X_1, \ldots, X_n$ be independent points in $\R^d$, such that $X_i$ is distributed according to the density $f_{d, \beta_i}$. 
	
Define the following random polytope
$\mathcal{P}_{n,d}^{\widebar{\beta}} = \textrm{conv}(X_1, \ldots, X_n)$. Our main result computes the expected volume of $\mathcal{P}_{n,d}^{\widebar{\beta}}$.

\begin{thm}
    We have 
\begin{align*}
    \E \Vol_d\left(\Pol{n}{d}{\beta}\right) &= \frac{\kappa_d}{\pi^{\frac{d+1}{2}}2^d} \cdot \sum_{i_1 < \ldots < i_{d+1}} \left(\frac{(d+1)^2}{2} +  \sum_{k = 1}^{d+1} \beta_{i_k} \right) \prod_{k = 1}^{d+1}\frac{\Gamma(\frac{d+2}{2} + \beta_{i_k})}{\Gamma(\frac{d+3}{2} + \beta_{i_k})} \\
    &\times \int_{-1}^1  (1-h^2)^{\sum_{k=1}^{d+1} \beta_{i_k} + \frac{d^2 + 2d - 1}{2}}\cdot \prod_{{k \neq i_1, \ldots, i_{d+1}}} F_{\beta_k + \frac{d-1}{2}}(h)  \dd h. 
\end{align*}
    
\end{thm}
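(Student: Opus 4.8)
The plan is to carry out the proof of Kabluchko, Temesvari, and Th\"ale for \eqref{ktt} with the parameters $\beta_1,\dots,\beta_n$ kept distinct throughout; in the equal-parameter case all the products below collapse to the powers appearing in \eqref{ktt}, and for $n=d+1$ the formula must reduce --- via Legendre's duplication formula for the normalizing constants --- to \eqref{miles} with $k=1$, the two checks I would use to pin the constants down. Since $\Pol{n}{d}{\beta}\subseteq\B^d$ almost surely, Fubini gives $\E\Vol_d(\Pol{n}{d}{\beta})=\int_{\B^d}\PP(x\in\Pol{n}{d}{\beta})\,\dd x$. For $x$ in general position one decomposes the event $\{x\in\Pol{n}{d}{\beta}\}$ using a triangulation of the convex hull that is intrinsic to $\{X_1,\dots,X_n\}$, so that $x$ lies in exactly one cell $\conv(X_{i_1},\dots,X_{i_{d+1}})$, which is characterized by the remaining $n-d-1$ points all lying in a common half-space $H_{i_1\dots i_{d+1}}$ that $X_{i_1},\dots,X_{i_{d+1}}$ determine; reproducing this decomposition is the geometric content carried over from \cite{KTT19}. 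Integrating in $x$ and taking expectations, independence factorizes the indicator, so the term indexed by $\{i_1,\dots,i_{d+1}\}$ becomes
\[
\E\!\left[\Vol_d\bigl(\conv(X_{i_1},\dots,X_{i_{d+1}})\bigr)\prod_{k\notin\{i_1,\dots,i_{d+1}\}}\PP\bigl(X_k\in H_{i_1\dots i_{d+1}}\mid X_{i_1},\dots,X_{i_{d+1}}\bigr)\right].
\]

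Fix one index set $S=\{i_1,\dots,i_{d+1}\}$. Each conditional probability $\PP(X_k\in H_S\mid X_S)$ equals, by Kubota's formula and the projection property of the beta family --- the orthogonal projection of a vector with density $f_{d,\beta}$ onto a line has density $f_{1,\beta+\frac{d-1}{2}}$ --- exactly $F_{\beta_k+\frac{d-1}{2}}(h)$, where $h=h(X_S)\in[-1,1]$ is the signed offset of the bounding hyperplane of $H_S$. To integrate out $X_S$ I would apply a Blaschke--Petkantschin-type change of variables making $h$ an explicit coordinate, and use the rotational invariance of the beta densities; combined with \eqref{miles} ($k=1$) for the conditional simplex-volume moment, this shows that, apart from the factor $\prod_{k\notin S}F_{\beta_k+\frac{d-1}{2}}(h)$, the $h$-integrand of the term indexed by $S$ is a constant --- built from the Blaschke--Petkantschin Jacobian, the normalizations $c_{d,\beta_{i_k}}$ and $\kappa_d$, and Gamma values --- times $(1-h^2)^{\sum_k\beta_{i_k}+\frac{d^2+2d-1}{2}}$. (Each point effectively carries a factor $(1-h^2)^{\beta_{i_k}+\frac{d-1}{2}}$ from restricting its density; the leftover power of $(1-h^2)$ comes from the Jacobian and the scaling of the volume.)

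Putting the pieces together, the term indexed by $S$ equals
\[
\frac{\kappa_d}{\pi^{(d+1)/2}2^d}\Bigl(\frac{(d+1)^2}{2}+\sum_k\beta_{i_k}\Bigr)\prod_k\frac{\Gamma(\tfrac{d+2}{2}+\beta_{i_k})}{\Gamma(\tfrac{d+3}{2}+\beta_{i_k})}\int_{-1}^{1}(1-h^2)^{\sum_k\beta_{i_k}+\frac{d^2+2d-1}{2}}\prod_{k\notin S}F_{\beta_k+\frac{d-1}{2}}(h)\,\dd h,
\]
once the Gamma functions from \eqref{miles} and the various constants are simplified; the scalar $\frac{(d+1)^2}{2}+\sum_k\beta_{i_k}$ is exactly the value $a$ for which $a\,\Gamma(a)=\Gamma(a+1)$, which reconciles the $\Gamma$-factor produced by the $h$-integral with the one appearing in \eqref{miles}. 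Summing over all $(d+1)$-subsets $S$ yields the asserted formula.

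The one step that is genuinely more than bookkeeping is the first one: choosing the intrinsic triangulation and the matching Blaschke--Petkantschin-type coarea identity so that the constraint on the $n-d-1$ outer points collapses to a product of one-dimensional beta CDFs $F_{\beta_k+\frac{d-1}{2}}(h)$ all sharing a \emph{single} scalar $h\in[-1,1]$. With distinct $\beta_i$ this has to be executed at the level of one $(d+1)$-tuple instead of counting identical copies, but the underlying geometry is the same as in \cite{KTT19}; everything afterwards --- the Jacobians, invoking \eqref{miles}, and the Gamma-function algebra --- is routine, only longer because $\prod_k(\cdots)$ is no longer a power.
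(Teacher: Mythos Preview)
Your key geometric step is not right. In $\R^d$ a set $\{X_{i_1},\dots,X_{i_{d+1}}\}$ of $d+1$ points in general position spans a full-dimensional simplex, not a hyperplane, so there is no half-space ``$H_{i_1\dots i_{d+1}}$ that $X_{i_1},\dots,X_{i_{d+1}}$ determine'' with a well-defined signed offset $h$. Consequently no intrinsic triangulation of $\conv(X_1,\dots,X_n)\subset\R^d$ has its $(d+1)$-cells characterised by a single half-space condition $\prod_{k\notin S}\mathbbm{1}\{X_k\in H_S\}$; for the Delaunay triangulation, say, the condition is an empty-circumball condition, and the resulting conditional probabilities are not $F_{\beta_k+\frac{d-1}{2}}(h)$. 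A direct facet decomposition in $\R^d$ (cones from an interior point over the facets) would instead produce a sum over $d$-subsets, matching Theorem~2 with $(a,b)=(1,1)$ rather than the $(d+1)$-subset sum you are aiming for.

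The paper obtains the $(d+1)$-subsets by \emph{lifting} to $\R^{d+1}$. By Statement~\ref{beta1}, the projection of $\Pol{n}{d+1}{\beta-\frac12}$ onto any hyperplane has the law of $\Pol{n}{d}{\beta}$; Kubota's formula then gives
\[
\E\Vol_d\bigl(\Pol{n}{d}{\beta}\bigr)=\frac{2\kappa_d}{(d+1)\kappa_{d+1}}\,\E V_d\bigl(\Pol{n}{d+1}{\beta-\frac12}\bigr)=\frac{\kappa_d}{(d+1)\kappa_{d+1}}\,\E T^{d,d+1}_{0,1}\bigl(\Pol{n}{d+1}{\beta-\frac12}\bigr),
\]
and in $\R^{d+1}$ the facets \emph{are} $(d+1)$-tuples, each one genuinely determining a hyperplane with the remaining points on one side. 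From here your Blaschke--Petkantschin step, the use of \eqref{miles}, and the Gamma simplification are exactly what the paper does (this is Theorem~2 with $d\to d+1$, $a=0$, $b=1$). One further point you omit: the lift requires $\beta_i>-\tfrac12$, so the range $\beta_i\in(-1,-\tfrac12]$ is recovered at the end by analytic continuation.
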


We also compute expected value of the following functional of $\mathcal{P}_{n,d}^{\widebar{\beta}}$ introduced by Wieacker \cite{jW78}:

\[
	T_{a,b}^{k,d}(P) = \sum\limits_{F \in \mathcal{F}_{k}(P)}\textrm{dist}^a(F,0)\textrm{Vol}_k^b(F). 
\]

\begin{thm}
    We have 
\begin{align*}
    \E T_{a,b}^{d-1,d}&(\mathcal{P}_{n,d}^{\widebar{\beta}}) = d! \kappa_d \sum_{i_1 < \ldots < i_d} \frac{\prod c_{d, \beta_i}}{\prod c_{d-1, \beta_i}} \cdot \E \left(\textrm{Vol}_{d-1}\left(\textrm{conv}(X_{i_1}, \ldots, X_{i_d})\right)\right)^{b+1} \\ &\times \int_{-1}^1 |h|^a (1-h^2)^{\sum_{k=1}^d \beta_{i_k} + \frac{d-1}{2}(d + b + 1)}\cdot \prod_{{k \neq i_1, \ldots, i_d}} F_{\beta_k + \frac{d-1}{2}}(h)  \dd h,
\end{align*}
where the formula for $\E \left(\textrm{Vol}_{d-1}\left(\textrm{conv}(X_{i_1}, \ldots, X_{i_d})\right)\right)^{b+1}$ was given in \eqref{miles}. 
\end{thm}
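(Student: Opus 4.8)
The plan is to combine the exact facet decomposition of a simplicial random polytope with the affine Blaschke--Petkantschin formula, in the spirit of the proof of \eqref{ktt}, but keeping the exponents $\beta_i$ distinct throughout.

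\textbf{Step 1: reduction to $d$-subsets.} Almost surely $X_1,\dots,X_n$ are in general position, so every $(d-1)$-face of $\Pol{n}{d}{\beta}$ is the convex hull of exactly $d$ of the points, and a $d$-tuple $X_{i_1},\dots,X_{i_d}$ spans a facet precisely when all remaining points lie strictly on one side of the hyperplane $H=\mathrm{aff}(X_{i_1},\dots,X_{i_d})$. Hence
\[
  T_{a,b}^{d-1,d}(\Pol{n}{d}{\beta})=\sum_{i_1<\dots<i_d}\mathrm{dist}^a(H,0)\,\Vol_{d-1}^{\,b}\big(\conv(X_{i_1},\dots,X_{i_d})\big)\,\mathbbm 1\{\text{all other }X_k\text{ on one side of }H\}.
\]
Taking expectations, conditioning on $X_{i_1},\dots,X_{i_d}$ and using independence, the indicator contributes $\prod_k p_k^{+}(H)+\prod_k p_k^{-}(H)$, where $p_k^{\pm}(H)$ is the probability that $X_k$ lies on the respective side of $H$. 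Parametrizing $H$ by a unit normal $u$ and signed distance $h$, and using that the orthogonal projection of an $f_{d,\beta}$-distributed vector onto a line has density $f_{1,\beta+\frac{d-1}{2}}$ (the same fact used to derive \eqref{ktt}), one gets $p_k^{+}(H)=1-F_{\beta_k+\frac{d-1}{2}}(h)$, $p_k^{-}(H)=F_{\beta_k+\frac{d-1}{2}}(h)$, and $\mathrm{dist}(H,0)=|h|$. Since $f_{1,\gamma}$ is even, $1-F_\gamma(h)=F_\gamma(-h)$, so after symmetrizing the eventual $h$-integral over $[-1,1]$ the two products merge into $2\prod_k F_{\beta_k+\frac{d-1}{2}}(h)$.

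\textbf{Step 2: Blaschke--Petkantschin and rescaling inside $H$.} For each fixed index set we are left with an integral of the shape $\int_{(\R^d)^d}|h|^a\,\Vol_{d-1}^{\,b}\big(\conv(x_1,\dots,x_d)\big)\,g(h)\prod_{k=1}^d f_{d,\beta_{i_k}}(x_k)\,\dd x_k$, with $h$ and $H$ functions of $x_1,\dots,x_d$. Applying the affine Blaschke--Petkantschin formula to these $d$ almost surely affinely independent points rewrites $\dd x_1\cdots\dd x_d$ as integration over hyperplanes $H$ --- parametrized by $(u,h)\in S^{d-1}\times[-1,1]$ --- and over the $d$ points inside $H$, at the cost of the factor $(d-1)!$ and a Jacobian equal to $\Vol_{d-1}\big(\conv(x_1,\dots,x_d)\big)$. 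Now write $x_j=hu+\sqrt{1-h^2}\,z_j$ with $z_j$ in the unit ball of $\R^{d-1}$; then
\[
  f_{d,\beta_{i_k}}(x_k)\,\dd x_k=\frac{c_{d,\beta_{i_k}}}{c_{d-1,\beta_{i_k}}}\,(1-h^2)^{\beta_{i_k}+\frac{d-1}{2}}\,f_{d-1,\beta_{i_k}}(z_k)\,\dd z_k,
\]
\[
  \Vol_{d-1}\big(\conv(x_1,\dots,x_d)\big)=(1-h^2)^{\frac{d-1}{2}}\,\Vol_{d-1}\big(\conv(z_1,\dots,z_d)\big).
\]
Collecting powers of $(1-h^2)$ yields $\sum_k\beta_{i_k}+\tfrac{d(d-1)}{2}$ from the densities plus $\tfrac{(d-1)(b+1)}{2}$ from the $b{+}1$ powers of $\Vol_{d-1}$ (the $b$ from the functional and one from the Jacobian), i.e.\ $\sum_k\beta_{i_k}+\tfrac{d-1}{2}(d+b+1)$, precisely the exponent in the statement.

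\textbf{Step 3: factoring, the constant, and the main obstacle.} The $z$-integral now separates off, and since $Z_{i_1},\dots,Z_{i_d}$ are $d$ independent points in $\R^{d-1}$ with densities $f_{d-1,\beta_{i_k}}$, it equals $\E\big(\Vol_{d-1}(\conv(Z_{i_1},\dots,Z_{i_d}))\big)^{b+1}$, which by \eqref{miles} (the right-hand side being independent of the ambient dimension) equals $\E\big(\Vol_{d-1}(\conv(X_{i_1},\dots,X_{i_d}))\big)^{b+1}$. What remains is the one-dimensional integral displayed in the theorem, times $\prod c_{d,\beta_{i}}/\prod c_{d-1,\beta_{i}}$ and a dimensional prefactor. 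That prefactor is assembled from the normalizing constant of the affine Blaschke--Petkantschin formula together with the $(d-1)!$ from its Jacobian, the factor $\tfrac12\,\omega_d=\tfrac{d\kappa_d}{2}$ coming from writing the invariant hyperplane measure as (direction)$\times$(signed distance) and integrating out the rotation-invariant direction, and the factor $2$ from the two admissible sides of $H$; I expect these to conspire to exactly $d!\,\kappa_d$. Matching this overall constant --- and confirming that \eqref{miles} may indeed be invoked for the inner simplex across the relevant ambient dimensions --- is the one place where genuine care is required; everything else is bookkeeping. Summing over all $d$-subsets gives the claimed formula.
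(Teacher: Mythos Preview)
Your proposal is correct and follows essentially the same route as the paper: facet decomposition over $d$-subsets, affine Blaschke--Petkantschin, half-space probabilities via the one-dimensional projection, rescaling inside the hyperplane (which the paper isolates as Lemma~\ref{beta3}), and the $h\mapsto -h$ symmetrization to merge the two products. The constant you flagged does assemble to $d!\,\kappa_d$ exactly as you guessed, via $(d-1)!\cdot\tfrac{\omega_d}{\omega_1}=(d-1)!\cdot\tfrac{d\kappa_d}{2}$ times the factor $2$ from the symmetrization, and the dimension-independence of the right-hand side of \eqref{miles} is precisely what lets one identify the inner $z$-integral with $\E\big(\Vol_{d-1}(\conv(X_{i_1},\dots,X_{i_d}))\big)^{b+1}$.
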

Note that for $\beta_1 = \ldots = \beta_n$ this result was obtained in [\cite{KTT19}, Theorem 2.11].

In the next section, we collect the necessary facts about the beta distributions, which will be used in the following two sections containing the proofs of the theorems.

\section{Basic facts on beta-distributions.}
	
Fix $k \leqslant d$ and let $L$ be a $k$-dimensional linear  subspace $L = \{x \in \R^d: x_{k+1} = \ldots = x_d = 0\}$. Let $X$ be a random vector with  density $f_{d, \beta}$.

\begin{st} \label{beta1}
    Let $\pi_l : \R^d \to L$ be an orthogonal projection onto L, then $\pi_L(X)$ has density $f_{k, \beta + \frac{d-k}{2}}$. 
\end{st}

Consider the hyperplane $E_0 = \{x \in \R^d: x_d = 0\}$ and denote by $E_h = \{x \in \R^d:  x_d = h\}$ an affine hyperplane parallel to $E_0$.  Consider the following half-spaces:
\[
	E_h^+ = \{x \in \R^d: x_d >h \} \,\text{ and }\, E_h^- =  \{x \in \R^d:  x_d < h\}. 
\]
	
\begin{st} \label{beta2} 
We have
	\begin{gather*} 
		\PP \left( X \in E_h^+ \right) = 1 - F_{1,\beta + \frac{d-1}{2}}(h), \; h \in [-1,1]\\
		\PP\left( X \in E_h^- \right) =  F_{1,\beta + \frac{d-1}{2}}(h), \; h \in [-1,1]\\
		\PP \left( X \in E_h^-  \cap E_{-h}^+ \right)=  F_{1,\beta + \frac{d-1}{2}}(h) - F_{1,\beta + \frac{d-1}{2}}(-h),\;  h \in [0,1]. 
	\end{gather*}
\end{st}

Proofs of Statements \ref{beta1} and \ref{beta2} can be found in \cite{KTT19}. 

For $\widebar{\gamma} = (\gamma_1, \ldots, \gamma_{m+1})$, where $\gamma_i > -1$,  denote by $\Delta_m^{\widebar{\gamma}}=\textrm{Vol}_m\left(\mathcal{P}_{m+1,m}^{\widebar{\gamma}}\right)$ volume of random beta-simplex. 

\begin{lemma}\label{beta3}
	Let $E \in A(d, d-1)$ be an affine hyperplane, such that $\textrm{dist}(E, 0) = h$. Then
 \begin{align}
     \int_{E^d} \left(\textrm{Vol}_{d-1}\left(\textrm{conv}(x_1, \ldots, x_d)\right)\right)^k &\cdot \prod\limits_{i = 1}^d f_{d, \beta_i}(x_i) \prod \lambda_E(\dd x_i) \\ \notag
     &= \frac{\prod c_{d, \beta_i}}{\prod c_{d-1, \beta_i}} \cdot (1 - h^2)^{\sum_{i = 1}^d \beta_i + \frac{d-1}{2}(k+d)}\cdot \E(\Delta_{d-1}^{\widebar{\beta}})^k.
 \end{align}
\end{lemma}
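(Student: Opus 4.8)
\textbf{Proof plan for Lemma \ref{beta3}.}
Since both sides are invariant under rotations of $\R^d$ fixing the origin, I would first reduce to the case $E = E_h = \{x\in\R^d : x_d = h\}$ with $h\in[0,1)$ (the case $h=\pm1$ being trivial, as $E\cap\B^d$ has empty interior). Under the identification $E_h \cong \R^{d-1}$ via $x = (y,h)$ with $y\in\R^{d-1}$, one has $|x|^2 = |y|^2 + h^2$, so for $|x|\le 1$
\[
f_{d,\beta_i}(x) = c_{d,\beta_i}\bigl(1-h^2-|y|^2\bigr)^{\beta_i} = c_{d,\beta_i}(1-h^2)^{\beta_i}\Bigl(1-\tfrac{|y|^2}{1-h^2}\Bigr)^{\beta_i},
\]
and $\lambda_E(\dd x)$ corresponds to Lebesgue measure $\dd y$ on $\R^{d-1}$.

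Next I would apply the linear change of variables $y_i = \sqrt{1-h^2}\,z_i$, $z_i\in\R^{d-1}$, $|z_i|\le 1$, for each $i=1,\dots,d$. This contributes a Jacobian factor $(1-h^2)^{(d-1)/2}$ per point, and turns the density factor into $f_{d-1,\beta_i}(z_i)$ up to the constant ratio $c_{d,\beta_i}/c_{d-1,\beta_i}$: precisely,
\[
f_{d,\beta_i}(x_i)\,\lambda_E(\dd x_i) = \frac{c_{d,\beta_i}}{c_{d-1,\beta_i}}\,(1-h^2)^{\beta_i + \frac{d-1}{2}}\, f_{d-1,\beta_i}(z_i)\,\dd z_i .
\]
Simultaneously, $\conv(x_1,\dots,x_d)$ is a translate (by $(0,\dots,0,h)$) of $\sqrt{1-h^2}\cdot\conv(z_1,\dots,z_d)$ viewed inside the hyperplane, so
\[
\bigl(\Vol_{d-1}(\conv(x_1,\dots,x_d))\bigr)^k = (1-h^2)^{\frac{(d-1)k}{2}}\bigl(\Vol_{d-1}(\conv(z_1,\dots,z_d))\bigr)^k .
\]

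Collecting the powers of $(1-h^2)$ — namely $\sum_{i=1}^d\beta_i$ from the densities, $d\cdot\frac{d-1}{2}$ from the $d$ Jacobians, and $\frac{(d-1)k}{2}$ from the volume rescaling — gives the exponent $\sum_{i=1}^d\beta_i + \frac{d-1}{2}(k+d)$, as required, while the constants assemble into $\prod c_{d,\beta_i}/\prod c_{d-1,\beta_i}$. The remaining integral is exactly
\[
\int_{(\R^{d-1})^d}\bigl(\Vol_{d-1}(\conv(z_1,\dots,z_d))\bigr)^k\prod_{i=1}^d f_{d-1,\beta_i}(z_i)\,\dd z_i = \E\bigl(\Delta_{d-1}^{\widebar{\beta}}\bigr)^k
\]
by definition of $\Delta_{d-1}^{\widebar{\beta}}$, which finishes the proof. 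The only point requiring genuine care — rather than being purely mechanical — is the bookkeeping of the three separate sources of powers of $(1-h^2)$; the geometric reduction itself (rotation invariance plus the affine rescaling of the slice) is routine.
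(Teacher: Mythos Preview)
Your proposal is correct and follows essentially the same route as the paper's own proof: rotate so that $E=\{x_d=h\}$, write $|x|^2=|y|^2+h^2$ under the identification $E\cong\R^{d-1}$, factor out $(1-h^2)^{\beta_i}$, rescale by $y_i=\sqrt{1-h^2}\,z_i$, and recognise the remaining integral as $\E(\Delta_{d-1}^{\widebar{\beta}})^k$. Your bookkeeping of the three sources of powers of $(1-h^2)$ is in fact cleaner than the paper's exposition, which simply writes the final exponent without separating the contributions.
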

\begin{proof}
    Due to rotational symmetry we can take $E = \{x \in \R^d: x_d = h\}$. Let L be the linear hyperplane parallel to $E$. If we denote by $x^* = \pi_l(x)$, we have $|x|^2 = |x^*|^2 + h^2$. Therefore 

    \begin{align*}
        &\int_{E^d} \left(\textrm{Vol}_{d-1}\left(\textrm{conv}(x_1, \ldots, x_d)\right)\right)^k \cdot \prod\limits_{i = 1}^d f_{d, \beta_i}(x_i) \prod \lambda_E(\dd x_i)  \\
       & = \prod c_{d, \beta_i} \int\limits_{(E \cap \B^d)^d}\left(\textrm{Vol}_{d-1}\left(\textrm{conv}(x_1, \ldots, x_d)\right)\right)^k\cdot \prod\limits_{i = 1}^d (1 - |x_i|^2)^{\beta_i}\prod \lambda_E(\dd x_i) \\ 
        &= \prod c_{d, \beta_i} \int\limits_{(L \cap \B^d\sqrt{1 - h^2})^d}\left(\textrm{Vol}_{d-1}\left(\textrm{conv}(x_1^*, \ldots, x_d^*)\right)\right)^k\cdot \prod\limits_{i = 1}^d (1 - |x^*_i|^2 - h^2)^{\beta_i}\prod \lambda_L(\dd x^*_i) \\
        &= \prod c_{d, \beta_i}\cdot  (1 - h^2)^{\sum \beta_i} \int\limits_{(L \cap \B^d\sqrt{1 - h^2})^d}\left(\textrm{Vol}_{d-1}\left(\textrm{conv}(x_1^*, \ldots, x_d^*)\right)\right)^k\cdot \prod\limits_{i = 1}^d \left(1 - \frac{|x^*_i|^2}{1 - h^2}\right)^{\beta_i}\prod \lambda_L(\dd x^*_i).
    \end{align*}
    Changing variables with $y_i = \frac{x^*_i}{\sqrt{1 - h^2}}$ leads to 
    \begin{align*}
        &\int_{E^d} \left(\textrm{Vol}_{d-1}\left(\textrm{conv}(x_1, \ldots, x_d)\right)\right)^k \cdot \prod\limits_{i = 1}^d f_{d, \beta_i}(x_i) \prod \lambda_E(\dd x_i)  \\
        &= \prod c_{d, \beta_i} \cdot (1 - h^2)^{\sum \beta_i + (d-1)(k + d)} \int\limits_{(\B^{d-1})^d}\left(\textrm{Vol}_{d-1}\left(\textrm{conv}(y_1, \ldots, y_d)\right)\right)^k\cdot \prod\limits_{i = 1}^d \left(1 - |y_i|^2 \right)^{\beta_i}\prod \lambda_L(\dd y_i) \\
        &= \frac{\prod c_{d, \beta_i}}{\prod c_{d-1, \beta_i}} \cdot (1 - h^2)^{\sum \beta_i + (d-1)(k + d)} \int\limits_{(\B^{d-1})^d}\left(\textrm{Vol}_{d-1}\left(\textrm{conv}(y_1, \ldots, y_d)\right)\right)^k\cdot \prod\limits_{i = 1}^d f_{d-1, \beta_i}(y_i)\prod \lambda_L(\dd y_i) \\
        &= \frac{\prod c_{d, \beta_i}}{\prod c_{d-1, \beta_i}} (1 - h^2)^{\sum \beta_i + (d-1)(k + d)} \cdot \E(\Delta_{d-1}^{\widebar{\beta}})^k, 
    \end{align*}
and the lemma follows. 
\end{proof}


\section{Proof of Theorem 2}

Recall that $T_{a,b}^{k,d}(P) = \sum\limits_{F \in \mathcal{F}_{k}(P)}\textrm{dist}^a(F,0)\textrm{Vol}_k^b(F)$. 
Let us calculate $\E T_{a,b}^{d-1,d}(\mathcal{P}_{n,d}^{\widebar{\beta}})$. Since points $X_1, \ldots, X_n$ are almost surely in general position, facets of $\mathcal{P}_{n,d}^{\widebar{\beta}}$ are almost surely $(d-1)$-dimensional simplices, since they can't have more than  $d$ vertices. So we have 

\begin{align*}
    \E T_{a,b}^{d-1,d}(\mathcal{P}_{n,d}^{\widebar{\beta}}) &= \E \left( \sum_{i_1 < \ldots < i_d} \mathbbm{1}(\conv(X_{i_1}, \ldots, X_{i_d}) \in \mathcal{F}_{d-1}(\Pol{n}{d}{\beta}))\right) \\ 
    &\times \textrm{dist}^a(\conv(X_{i_1},\ldots, X_{i_d}), 0)  
    \cdot \textrm{Vol}_{d-1}^b(\conv(X_{i_1},\ldots, X_{i_d}))  \\
    &= \sum_{i_1 < \ldots < i_d} \int\limits_{(\R^d)^d} 
    \PP\left(\conv(X_{i_1}, \ldots, X_{i_d}) \in \mathcal{F}_{d-1}(\Pol{n}{d}{\beta}) \, \textbar\, X_{i_1} = x_1, \ldots, X_{i_d} = x_d\right) \\
    &\times \textrm{dist}^a(\conv(x_1,\ldots, x_d), 0)  
    \cdot \textrm{Vol}_{d-1}^b(\conv(x_1,\ldots, x_d))\prod f_{d,\beta_{i_k}}(x_k) \prod \lambda(\dd x_i). 
\end{align*}

Applying Blaschke-Petkantchin formula we get 

\begin{align*}
    \int\limits_{(\R^d)^d} 
    \PP&\left(\conv(X_{i_1}, \ldots, X_{i_d}) \in \mathcal{F}_{d-1}(\Pol{n}{d}{\beta}) \, \textbar\, X_{i_1} = x_1, \ldots, X_{i_d} = x_d\right) \\
    &\times \textrm{dist}^a(\conv(x_1,\ldots, x_d), 0)  
    \cdot \textrm{Vol}_{d-1}^b(\conv(x_1,\ldots, x_d)) \prod f_{d,\beta_{i_k}}(x_k)\prod \lambda(\dd x_i) \\
    &= (d-1)!\frac{\omega_d}{\omega_1}\int\limits_{A_{d,d-1}}\int\limits_{E^d}\PP\left(\conv(X_{i_1}, \ldots, X_{i_d}) \in \mathcal{F}_{d-1}(\Pol{n}{d}{\beta}) \, \textbar\, X_{i_1} = x_1, \ldots, X_{i_d} = x_d\right) \\
    &\times \textrm{dist}^a(\conv(x_1,\ldots, x_d), 0)\cdot \textrm{Vol}_{d-1}^{b+1}(\conv(x_1,\ldots, x_d)) \prod f_{d,\beta_{i_k}}(x_k)\prod \lambda_{E}(\dd x_i)\mu_{d,d-1}(\dd E). 
\end{align*}

The conditional probability in the expression above is the probability that all $X_k$ with $k \neq i_1, \ldots, i_d$ lie in the same halfspace from the $\textrm{aff}(x_1, \ldots, x_d) = E$. If $\textrm{dist}(\conv(x_1,\ldots, x_d), 0) = h$, then, due to rotational invariance  and Statement \ref{beta2} we get 

\begin{align*}
    &\PP\left(\conv(X_{i_1}, \ldots, X_{i_d}) \in \mathcal{F}_{d-1}(\Pol{n}{d}{\beta}) \, \textbar \, X_{i_1} = x_1, \ldots, X_{i_d} = x_d\right) \\  &=\PP\left(\bigcap_{k \neq i_1, \ldots, i_d} \{X_{k} \in E^+\}\right) + \PP\left(\bigcap_{k \neq i_1, \ldots, i_d} \{X_{k} \in E^-\}\right) \\
    &= \prod_{{k \neq i_1, \ldots, i_d}} \left(1 - F_{\beta_k + \frac{d-1}{2}}(h)\right) + \prod_{{k \neq i_1, \ldots, i_d}} F_{\beta_k + \frac{d-1}{2}}(h). 
\end{align*}

Hence (again, due to symmetry)

\begin{align*}
    &(d-1)!\frac{\omega_d}{\omega_1}\int\limits_{A_{d,d-1}}\int\limits_{E^d}\PP\left(\conv(X_{i_1}, \ldots, X_{i_d}) \in \mathcal{F}_{d-1}(\Pol{n}{d}{\beta}) \, \textbar\, X_{i_1} = x_1, \ldots, X_{i_d} = x_d\right) \\
    &\times \textrm{dist}^a(\conv(x_1,\ldots, x_d), 0)\cdot \textrm{Vol}_{d-1}^{b+1}(\conv(x_1,\ldots, x_d)) \cdot \prod f_{d,\beta_{i_k}}(x_k)\prod \lambda_{E}(\dd x_i)\mu_{d,d-1}(\dd E) \\
    & = (d-1)!\frac{\omega_d}{\omega_1} \int_{G_{d,d-1}}\int_{-1}^1\int_{(L + h)^d} \left[ \prod_{{k \neq i_1, \ldots, i_d}} \left(1 - F_{\beta_k + \frac{d-1}{2}}(h)\right) + \prod_{{k \neq i_1, \ldots, i_d}} F_{\beta_k + \frac{d-1}{2}}(h) \right]\\
    &\times |h|^a \cdot \textrm{Vol}_{d-1}^{b+1}(\conv(x_1,\ldots, x_d)) \cdot \prod f_{d,\beta_{i_k}}(x_k) \prod \lambda_{L+h}(\dd x_i)\dd h \nu_{d,d-1}(\dd L) \\ 
    &= d! \kappa_d \int_0^1 \left[ \prod_{{k \neq i_1, \ldots, i_d}} \left(1 - F_{\beta_k + \frac{d-1}{2}}(h)\right) + \prod_{{k \neq i_1, \ldots, i_d}} F_{\beta_k + \frac{d-1}{2}}(h) \right] \cdot h^a \\
    &\times \int_{E^d} \textrm{Vol}_{d-1}^{b+1}(\conv(x_1,\ldots, x_d)) \cdot \prod f_{d,\beta_{i_k}}(x_k) \prod \lambda_{E}(\dd x_i) \dd h, 
\end{align*}

where $E = E_h$. 

Using Lemma \ref{beta3} we obtain 

\begin{align*}
    &d! \kappa_d \int_0^1 \left[ \prod_{{k \neq i_1, \ldots, i_d}} \left(1 - F_{\beta_k + \frac{d-1}{2}}(h)\right) + \prod_{{k \neq i_1, \ldots, i_d}} F_{\beta_k + \frac{d-1}{2}}(h) \right] \cdot h^a \\
    &\times \int_{E^d} \textrm{Vol}_{d-1}^{b+1}(\conv(x_1,\ldots, x_d)) \cdot \prod f_{d,\beta_{i_k}}(x_k) \prod \lambda_{E}(\dd x_i) \dd h \\
    & = d!\kappa_d \cdot \frac{\prod c_{d, \beta_i}}{\prod c_{d-1, \beta_i}} \cdot \E \left(\Delta_{d-1}^{\widebar{\beta_I}}\right)^{b+1} \cdot \int_0^1 h^a (1-h^2)^{\sum_{k=1}^d \beta_{i_k} + \frac{d-1}{2}(d + b + 1)} \\
    &\times \left[ \prod_{{k \neq i_1, \ldots, i_d}} \left(1 - F_{\beta_k + \frac{d-1}{2}}(h)\right) + \prod_{{k \neq i_1, \ldots, i_d}} F_{\beta_k + \frac{d-1}{2}}(h) \right] \dd h \\
    & = d!\kappa_d \cdot \frac{\prod c_{d, \beta_i}}{\prod c_{d-1, \beta_i}} \cdot \E \left(\Delta_{d-1}^{\widebar{\beta_I}}\right)^{b+1} \cdot \int_{-1}^1 |h|^a (1-h^2)^{\sum_{k=1}^d \beta_{i_k} + \frac{d-1}{2}(d + b + 1)} \\
    &\times \prod_{{k \neq i_1, \ldots, i_d}} F_{\beta_k + \frac{d-1}{2}}(h)  \dd h,
\end{align*}

where $\beta_I = (\beta_{i_1}, \ldots, \beta_{i_d})$. 

Hence, 
\begin{align*}
    \E T_{a,b}^{d-1,d}&(\mathcal{P}_{n,d}^{\widebar{\beta}}) = d! \kappa_d \sum_{i_1 < \ldots < i_d} \frac{\prod c_{d, \beta_i}}{\prod c_{d-1, \beta_i}} \cdot \E \left(\Delta_{d-1}^{\widebar{\beta_I}}\right)^{b+1} \\ &\times \int_{-1}^1 |h|^a (1-h^2)^{\sum_{k=1}^d \beta_{i_k} + \frac{d-1}{2}(d + b + 1)}\cdot \prod_{{k \neq i_1, \ldots, i_d}} F_{\beta_k + \frac{d-1}{2}}(h)  \dd h. 
\end{align*}

\section{Proof of Theorem 1}

First let us consider only the case when all $\beta_i > -\frac{1}{2}$. Then if $L$ is an arbitrary hyperplane in $G(d+1, d)$, by Statement \ref{beta1} we have 
\[
    \Pol{n}{d+1}{\beta - \frac{1}{2}}\textbar_L = \Pol{n}{d}{\beta}.  
\]

Applying Kubota formula, we get

\begin{align*}
    V_d(\Pol{n}{d+1}{\beta - \frac{1}{2}}) = (d+1)\frac{\kappa_{d+1}}{2\kappa_{d}}\int_{G(d+1, d)} \textrm{Vol}_d\left(\Pol{n}{d+1}{\beta - \frac{1}{2}}\textbar_L\right) \nu_{d+1, d} (\dd L),
\end{align*}

therefore 

\begin{align*}
    \E V_d\left((\Pol{n}{d+1}{\beta - \frac{1}{2}}\right) &= (d+1)\frac{\kappa_{d+1}}{2\kappa_{d}}\int_{G(d+1, d)} \E\textrm{Vol}_d\left(\Pol{n}{d+1}{\beta - \frac{1}{2}}\textbar_L\right) \nu_{d+1, d} (\dd L) \\
    & = (d+1)\frac{\kappa_{d+1}}{2\kappa_{d}} \int_{G(d+1, d)} \E\textrm{Vol}_d\left(\Pol{n}{d}{\beta}\right) \nu_{d+1, d} (\dd L)  = (d+1)\frac{\kappa_{d+1}}{2\kappa_{d}} \E \textrm{Vol}_d \Pol{n}{d}{\beta}. 
\end{align*}

Since $V_d\left((\Pol{n}{d+1}{\beta - \frac{1}{2}}\right)  = \frac{1}{2} T_{0,1}^{d,d+1}(\Pol{n}{d+1}{\beta - \frac{1}{2}})$, we obtain 

\begin{align*}
    &\E \Vol_d\left(\Pol{n}{d}{\beta}\right) = \frac{\kappa_d}{(d+1)\kappa_{d+1}}\E T_{0,1}^{d,d+1}(\Pol{n}{d+1}{\beta - \frac{1}{2}}) = \frac{\kappa_d}{(d+1)\kappa_{d+1}}(d+1)!\kappa_{d+1}\\
    &\times \sum_{i_1 < \ldots < i_{d+1}}
    \frac{\prod c_{{d+1}, \beta_i - \frac{1}{2}}}{\prod c_{d, \beta_i - \frac{1}{2}}} \cdot \E \left(\Delta_{d}^{\widebar{\beta_I - \frac{1}{2}}}\right)^{2} \cdot \int_{-1}^1  (1-h^2)^{\sum_{k=1}^{d+1} \beta_{i_k} - \frac{d+1}{2} + \frac{d}{2}(d + 3)}\cdot \prod_{{k \neq i_1, \ldots, i_{d+1}}} F_{\beta_k + \frac{d-1}{2}}(h)  \dd h. 
\end{align*}

Applying \eqref{miles}, we obtain

\begin{align*}
    \E \left(\Delta_{d}^{\widebar{\beta_I - \frac{1}{2}}}\right)^{2} &= (d!)^{-2}\frac{\Gamma\left(\frac{(d+1)(d+2)}{2} + \sum_{k = 1}^{d+1} (\beta_{i_k} - \frac{1}{2}) + 1\right)}{\Gamma\left(\frac{d(d+3)}{2} + \sum_{k = 1}^{d+1} (\beta_{i_k} - \frac{1}{2}) + 1\right)} \cdot \prod_{j = 1}^d \frac{\Gamma\left(\frac{j+2}{2}\right)}{\Gamma\left(\frac{j}{2}\right)} \cdot \prod_{k = 1}^{d+1}\frac{\Gamma\left(\frac{d}{2} + \beta_{i_k} - \frac{1}{2} + 1\right)}{\Gamma\left(\frac{d + 2}{2} + \beta_{i_k} - \frac{1}{2} + 1\right)} \\
    &= \frac{1}{d!2^d}\cdot\left(\frac{(d+1)^2}{2} +  \sum_{k = 1}^{d+1} \beta_{i_k} \right) \cdot \prod_{k = 1}^{d+1}\frac{1}{\frac{d+1}{2} + \beta_{i_k}}, 
\end{align*}

hence 

\begin{align*}
    \E \Vol_d\left(\Pol{n}{d}{\beta}\right) &= \frac{\kappa_d}{2^d}
    \cdot \sum_{i_1 < \ldots < i_{d+1}}
    \prod_{k = 1}^{d+1}\frac{c_{d+1, \beta_{i_k} - \frac{1}{2}}}{c_{d, \beta_{i_k} - \frac{1}{2}}} \cdot \left(\frac{(d+1)^2}{2} +  \sum_{k = 1}^{d+1} \beta_{i_k} \right)\cdot\prod_{k = 1}^{d+1}\frac{1}{\frac{d+1}{2} + \beta_{i_k}} \\
    &\times \int_{-1}^1  (1-h^2)^{\sum_{k=1}^{d+1} \beta_{i_k} + \frac{d^2 + 2d - 1}{2}}\cdot \prod_{{k \neq i_1, \ldots, i_{d+1}}} F_{\beta_k + \frac{d-1}{2}}(h)  \dd h  \\
    &= \frac{\kappa_d}{2^d} \cdot \sum_{i_1 < \ldots < i_{d+1}} \left(\frac{(d+1)^2}{2} +  \sum_{k = 1}^{d+1} \beta_{i_k} \right) \prod_{k = 1}^{d+1}\frac{\Gamma(\frac{d+2}{2} + \beta_{i_k})}{\sqrt{\pi}\Gamma(\frac{d+1}{2} + \beta_{i_k})} \cdot\prod_{k = 1}^{d+1}\frac{1}{\frac{d+1}{2} + \beta_{i_k}} \\
    &\times \int_{-1}^1  (1-h^2)^{\sum_{k=1}^{d+1} \beta_{i_k} + \frac{d^2 + 2d - 1}{2}}\cdot \prod_{{k \neq i_1, \ldots, i_{d+1}}} F_{\beta_k + \frac{d-1}{2}}(h)  \dd h \\
    &= \frac{\kappa_d}{\pi^{\frac{d+1}{2}}2^d} \cdot \sum_{i_1 < \ldots < i_{d+1}} \left(\frac{(d+1)^2}{2} +  \sum_{k = 1}^{d+1} \beta_{i_k} \right) \prod_{k = 1}^{d+1}\frac{\Gamma(\frac{d+2}{2} + \beta_{i_k})}{\Gamma(\frac{d+3}{2} + \beta_{i_k})} \\
    &\times \int_{-1}^1  (1-h^2)^{\sum_{k=1}^{d+1} \beta_{i_k} + \frac{d^2 + 2d - 1}{2}}\cdot \prod_{{k \neq i_1, \ldots, i_{d+1}}} F_{\beta_k + \frac{d-1}{2}}(h)  \dd h. 
\end{align*}

So far we have proved Theorem 1 only for the case when $\beta_i > -\frac{1}{2}.$ Applying the argument of analytic continuation finishes the proof. 

	\bibliographystyle{plain}
	\bibliography{bib}
	
\end{document}